\g@addto@macro\bfseries{\boldmath}
\theoremstyle{plain}
    \newtheorem{theorem}[equation]{Theorem}
    \newtheorem{lemma}[equation]{Lemma}
    \newtheorem{corollary}[equation]{Corollary}
    \newtheorem{proposition}[equation]{Proposition}
    \newtheorem*{theorem*}{Theorem}
    \newtheorem*{proposition*}{Proposition}
    \newtheorem*{corollary*}{Corollary}
    \newtheorem*{lemma*}{Lemma}
    \newtheorem*{conjecture*}{Conjecture}
    \newtheorem{definition-theorem}[equation]{Definition/Theorem}
    \newtheorem{definition-lemma}[equation]{Definition/Lemma}
\theoremstyle{definition}
    \newtheorem{definition}[equation]{Definition}
    \newtheorem{remark}[equation]{Remark}
    \newtheorem{remarks}[equation]{Remarks}
    \newcommand{\C}{\mathbb{C}}
    \newcommand{\N}{\mathbb{N}}
    \newcommand{\Z}{\mathbb{Z}}
   	\renewcommand{\phi}{\varphi}
	\let\epsilon\varepsilon
    \newcommand{\Bounded}{\operatorname{B}}
    \newcommand{\CB}{\operatorname{CB}}
    \newcommand{\cb}{\mathrm{cb}}
   \newcommand{\functor}{\mathsf}
\newcommand{\into}{\hookrightarrow}
\newcommand{\id}{\mathrm{id}}
  \renewcommand{\prod}{\bigsqcap}
 \newcommand{\flip}{\tau}
 \newcommand{\h}{\mathrm{h}}
 	\DeclareMathOperator{\opmod}{OM}
	\DeclareMathOperator{\CMod}{CM}
	\DeclareMathOperator{\CComod}{CC}
	\DeclareMathOperator{\Des}{Des}
	\newcommand{\module}{}
\title[Commutativity of the Haagerup tensor product]{Commutativity of the Haagerup tensor product and base change for operator modules}
\author{Tyrone Crisp }
\date{December 1, 2020}
\address{Department of Mathematics \& Statistics, University of Maine.
5752 Neville Hall, Room 333.
Orono, ME 04469 USA}
\email{tyrone.crisp@maine.edu}
\subjclass[2010]{46L06 (46M15, 18C15)}
\keywords{Haagerup tensor product, Operator modules, Beck-Chevalley condition, Descent}
\begin{document}

\begin{abstract}
By computing the completely bounded norm of the flip map on the Haagerup tensor product $C_0 Y_1\otimes_{C_0 X} C_0 Y_2$ associated to a pair of continuous mappings of locally compact Hausdorff spaces $Y_1\rightarrow X\leftarrow Y_2$, we establish a simple characterisation of the Beck-Chevalley condition for base change of operator modules over commutative $C^*$-algebras, and  a descent theorem for continuous fields of Hilbert spaces.
\end{abstract}

\maketitle

It is known (cf.~\cite{Blecher-donotpreserve}) that the canonical map $B_1\otimes^{\h} B_2\to B_1\otimes^{\min} B_2$ from the Haagerup tensor product of two $C^*$-algebras to their minimal $C^*$-algebra tensor product is a completely bounded isomorphism if and only if one of the $B_i$  is finite dimensional. In this note we shall establish a relative version of this result in the commutative case, and draw some geometric conclusions. 


Let $\mu_1:Y_1\to X$ and $\mu_2:Y_2\to X$ be a pair of continuous maps of locally compact Hausdorff topological spaces. We wish to compare the $C^*$-algebra $C_0(Y_1\times_X Y_2)$ of continuous, complex-valued, vanishing-at-infinity functions on the fibred product 
\[
Y_1\times_X Y_2 = \{ (y_1,y_2)\in Y_1\times Y_2 \ |\ \mu_1 y_1=\mu_2y_2\}
\]
with the relative Haagerup tensor product $C_0 Y_1\otimes_{C_0 X}^{\h} C_0 Y_2$. See \cite{BLM}, or below, for the definition of this tensor product; for now it will be important to note only that both $C_0(Y_1\times_X Y_2)$ and $C_0 Y_1\otimes^{\h}_{C_0X} C_0 Y_2$ are \emph{operator spaces}, so that there are canonical norms on the matrix spaces $M_n(C_0(Y_1\times_X Y_1))$ and $M_n( C_0 Y_1\otimes^{\h}_{C_0 X} C_0 Y_2)$.

The  map sending each elementary tensor $b_1\otimes b_2 \in C_0 Y_1\otimes C_0 Y_2$ to the function $(y_1,y_2)\mapsto b_1(y_1)b_2(y_2)$ extends to a map
\begin{equation}\label{eq:comparison}\tag{$\star$}
C_0 Y_1\otimes^{\h}_{C_0 X} C_0 Y_2 \to C_0(Y_1\times_X Y_2),
\end{equation}
and the question that we shall answer is: under what circumstances is \eqref{eq:comparison} a completely bounded isomorphism? (We recall that a linear map $t:E\to F$ between operator spaces is \emph{completely bounded} if the induced maps on matrix spaces $M_n(E)\to M_n(F)$ are  uniformly bounded; and $t$ is a \emph{completely bounded isomorphism} if it is a linear isomorphism and both $t$ and $t^{-1}$ are completely bounded.)

The noncommutativity of the Haagerup tensor product presents an obvious obstruction to \eqref{eq:comparison} being a completely bounded isomorphism: in general the flip map $b_1\otimes b_2\mapsto b_2\otimes b_1$ is not completely bounded with respect to the Haagerup norm, whereas this map is completely isometric for the $C^*$-algebra norm. We shall prove that this noncommutativity is the only obstruction, and characterise the vanishing of the obstruction in terms of a finiteness condition on the maps $\mu_i$:

\begin{theorem}\label{thm:main}
The following are equivalent for continuous maps $\mu_1:Y_1\to X$ and $\mu_2:Y_2\to X$:
\begin{enumerate}[\rm(a)]
\item The canonical map \eqref{eq:comparison} is a completely bounded isomorphism.
\item The flip map $C_0 Y_1\otimes_{C_0 X}^{\h} C_0 Y_2 \to C_0 Y_2 \otimes_{C_0 X}^{\h} C_0 Y_1$ is completely bounded.
\item $\displaystyle\max_{x\in X}\min_{i=1,2}\# \mu_i^{-1}x<\infty$.
\end{enumerate}
\end{theorem}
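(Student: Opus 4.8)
The plan is to run the cyclic implications (a) $\Rightarrow$ (b), (b) $\Rightarrow$ (c), (c) $\Rightarrow$ (a). Throughout I use the identifications $C_0(Y_1\times_X Y_2)=C_0Y_1\otimes_{C_0X}^{\min}C_0Y_2$ (Gelfand duality) and $C_0(Y_2\times_XY_1)=C_0(Y_1\times_XY_2)$ via the coordinate swap, a $*$-isomorphism, hence a complete isometry $\sigma$. The map $(\star)$ is the canonical complete contraction from the Haagerup to the minimal tensor product; its range is dense and contains the algebraic tensor product, so condition (a) is equivalent to the assertion that the Haagerup and minimal $C_0X$-norms on $C_0Y_1\otimes_{C_0X}C_0Y_2$ are completely equivalent. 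Two elementary facts are used repeatedly: the minimal norm is always dominated by the Haagerup norm, and on elementary tensors the flip intertwines $(\star)$ for $(Y_1,Y_2)$ with $\sigma\circ(\star)$ for $(Y_2,Y_1)$.

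(a) $\Rightarrow$ (b) is then immediate: since the hypothesis is symmetric in $Y_1,Y_2$, we also have the Haagerup norm on $C_0Y_2\otimes_{C_0X}C_0Y_1$ completely equivalent to the minimal one, so for a matrix $u$ over $C_0Y_1\otimes^{\h}_{C_0X}C_0Y_2$ we get $\|\mathrm{flip}(u)\|_{\h}\le C\,\|\mathrm{flip}(u)\|_{\min}=C\,\|\sigma((\star)u)\|_{\min}=C\,\|(\star)u\|_{\min}\le C\,\|u\|_{\h}$, i.e.\ the flip is completely bounded.

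(b) $\Rightarrow$ (c), by contraposition. If (c) fails, then for each $m$ there is $x\in X$ with $m$ distinct points in $\mu_1^{-1}x$ and $m$ distinct points in $\mu_2^{-1}x$. Restriction of functions to these finite point sets gives surjective $*$-homomorphisms $C_0Y_i\to\C^{m}$ that are $C_0X$-linear for the module structure on $\C^{m}$ pulled back along the character $\mathrm{ev}_x$; as $C_0X$ then acts on both factors by scalars, the induced complete quotient map reads
\[ C_0Y_1\otimes^{\h}_{C_0X}C_0Y_2 \longrightarrow \C^{m}\otimes^{\h}_{C_0X}\C^{m}=\C^{m}\otimes^{\h}\C^{m}, \]
and it intertwines both $(\star)$ and the flips. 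It therefore suffices to know that the flip on $\C^{m}\otimes^{\h}\C^{m}$ (with the supremum norm on each factor) is not completely bounded uniformly in $m$; this is the core computation of the paper, exhibiting an explicit family of matrix-valued elements on which the flip multiplies the Haagerup norm by a factor growing with $m$, and I expect it to be the \emph{main obstacle}. (The same reduction, together with the finite-dimensional case of Blecher's theorem \cite{Blecher-donotpreserve} --- for which one may take the discrete Fourier matrix $[\omega^{ab}]$, whose minimal norm is $1$ and whose Haagerup norm equals $\sqrt m$ --- gives the analogous (a) $\Rightarrow$ (c); and note that reducing the flip computation to the known blow-up of the flip $\mathrm{R}_m\otimes^{\h}\mathrm{C}_m\to \mathrm{C}_m\otimes^{\h}\mathrm{R}_m$ is not available, since the row and column spaces $\mathrm{R}_m,\mathrm{C}_m$ are not operator-space quotients of any commutative $C^*$-algebra.)

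(c) $\Rightarrow$ (a). Put $N=\max_{x}\min_{i}\#\mu_i^{-1}x$; I claim $\|(\star)^{-1}\|_{\cb}\le\sqrt N$, which gives (a) (and re-gives (b), via $\mathrm{flip}=(\star)_2^{-1}\circ\sigma\circ(\star)_1$ with $\|\sigma\|_{\cb}=1$). The strategy is to verify the completely-bounded-below estimate fibrewise over $X$ and then glue. Tensoring down along $\mathrm{ev}_x\colon C_0X\to\C$ carries $(\star)$ to the canonical map $\C^{m_1}\otimes^{\h}\C^{m_2}\to\C^{m_1m_2}$ with $m_i=\#\mu_i^{-1}x$, and there a direct factorisation estimate --- using only $\min(m_1,m_2)$-term representations drawn from the smaller fibre --- bounds the Haagerup norm by $\sqrt{\min(m_1,m_2)}\le\sqrt N$ times the minimal norm, at every matrix level. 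Passing from this uniform fibrewise bound to the global statement is a continuous-field argument, and the subtlety here is the \emph{second main obstacle}: the covering $X=\{x:\#\mu_1^{-1}x\le N\}\cup\{x:\#\mu_2^{-1}x\le N\}$ need not be by open sets, so no naive partition of unity on $X$ applies. I would handle this by lifting each fibrewise factorisation to a factorisation over a neighbourhood of $x$ whose Haagerup/minimal-norm ratio exceeds $\sqrt N$ only by an error tending to $0$ with the neighbourhood, and then patching with a partition of unity subordinate to the resulting genuinely open cover of $X$.
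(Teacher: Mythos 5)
Your architecture coincides with the paper's --- the same cycle (a)$\Rightarrow$(b)$\Rightarrow$(c)$\Rightarrow$(a), with the flip factored through the minimal tensor product, the failure of (c) pushed down to finite fibres, and a fibrewise $\sqrt{N}$ estimate for (c)$\Rightarrow$(a) --- but both steps you flag as ``main obstacles'' are genuinely missing, and they are the substance of the theorem. For (b)$\Rightarrow$(c) you reduce to showing that the flip on $\C^m\otimes^{\h}\C^m$ is not completely bounded uniformly in $m$, and stop. This is not a citation-sized fact: since the factors are commutative, the level-one Haagerup norm $\inf\|\sum_i f_if_i^*\|^{1/2}\|\sum_i g_i^*g_i\|^{1/2}$ is symmetric under exchanging the two factors, so the flip is an \emph{isometry} at matrix level $n=1$, and the blow-up only appears at level $n\approx m$. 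The paper produces it explicitly (Lemma \ref{lem:flip-lower-bound}): the element $F=\sum_{i,j,k}\omega^{k(j-i)}e_{i,j}\otimes b_j\otimes c_k$ at matrix level $m$ (with $\omega$ a primitive $m$th root of unity and $b_j,c_k$ bump functions at the chosen fibre points) satisfies $\|F\|_{\h}\le\sqrt m$ via an explicit factorisation $F=D\odot E$, while $\|(\id_{M_m}\otimes\flip)F\|_{\h}\ge m$ is extracted by composing with two evaluation $*$-homomorphisms into $M_m$, one conjugated by the Fourier unitary $m^{-1/2}[\omega^{ij}]$. Your parenthetical about the Fourier matrix only shows that the Haagerup and minimal norms differ by $\sqrt m$ at level one, which bears on (a) but says nothing about the flip; without a construction of the above kind, (b)$\Rightarrow$(c) is unproven.

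For (c)$\Rightarrow$(a), the globalisation you propose is not what the paper does, and as sketched it has real problems: a locally compact Hausdorff space need not be paracompact, so partitions of unity subordinate to your open cover need not exist; and lifting a fibrewise Haagerup factorisation to a neighbourhood with norm control, then concatenating the (possibly infinitely many) local factorisations without losing control of $\|D\|\,\|E\|$, is itself a substantial unproved claim. The paper instead proves an exact localisation formula (Lemma \ref{lem:h-norm-sup-over-x}): $\|F\|_{\h}=\sup_{x\in X}\|(\id_{M_n}\otimes\iota_{1,x}^*\otimes\iota_{2,x}^*)F\|_{\h}$, obtained by embedding $C_0Y_1\otimes^{\h}_{C_0X}C_0Y_2$ completely isometrically into the amalgamated free product $C_0Y_1\ast_{C_0X}C_0Y_2$ and noting that $C_0X$ acts by a character in every irreducible representation; combined with the elementary bound $\|F\|_{\h}\le\sqrt m\,\|F\|_{\min}$ on $M_n\otimes\C^m\otimes A$ (Lemma \ref{lem:h-min}), this yields $\|[\pi_1^*\pi_2^*]^{-1}\|_{\cb}\le\sqrt N$ with no gluing at all --- this free-product localisation is the missing idea. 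A smaller point: your (a)$\Rightarrow$(b) step assumes that (a) for the ordered pair $(Y_1,Y_2)$ implies the analogous statement for $(Y_2,Y_1)$; this is true but requires the operator-space adjoint argument (as in the paper), not mere ``symmetry of the hypothesis''.
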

We in fact obtain a quantitative version of the equivalences by relating the completely bounded norms of the flip and of the inverse of \eqref{eq:comparison} to the quantity appearing in (c); see Lemma \ref{lem:pi1pi2} and Proposition \ref{prop:flip}.

The study of the map \eqref{eq:comparison} is quite natural  from the point of view of abstract harmonic analysis: $C_0 Y_1\otimes^{\h}_{C_0 X} C_0 Y_2$ is a commutative Banach algebra, isomorphic (by the Grothendieck inequality, see e.g.~\cite{Pisier-Grothendieck}) to the relative projective tensor product $C_0 Y_1\widehat{\otimes}_{C_0 X} C_0 Y_2$, and the map \eqref{eq:comparison} is the Gelfand transform (cf.~\cite[Theorem 1]{Gelbaum-TPOBA}).

Our motivation for studying \eqref{eq:comparison} comes from quite a different direction, as we shall now explain. 

\subsection*{Base change of operator modules}

To each $C^*$-algebra $A$ is associated a category $\opmod(A)$ of (nondegenerate, right) \emph{operator modules} over $A$, with completely bounded $A$-linear maps as morphisms. Concretely, an operator $A$-module is a norm-closed linear subspace $E\subseteq \Bounded(H)$, where $H$ is a Hilbert space on which $A$ is represented as bounded operators, satisfying $EA=E$; see \cite{BLM} for details. 

The most important property of operator modules for our purpose here is that they can be both pushed forward and pulled back along homomorphisms of $C^*$-algebras. In particular, to each continuous map of locally compact Hausdorff spaces $\mu:Y\to X$ there is associated a pair of functors 
\[
\mu_*:\opmod(C_0 Y)\to \opmod(C_0 X)\quad \text{and}\quad \mu^*:\opmod(C_0 X) \to \opmod(C_0 Y)
\]
given by Haagerup tensor product with $C_0 Y$, viewed either as a $C_0 Y$-$C_0 X$ bimodule (in the case of $\mu_*$) or as a $C_0 X$-$C_0 Y$-bimodule (in the case of $\mu^*$). If the map $\mu$ is proper then the functor $\mu^*$ is left-adjoint to $\mu_*$, meaning that there are natural isomorphisms between the spaces of completely bounded module maps
\[
\CB_{C_0X}(F,\mu_* E) \cong \CB_{C_0 Y}(\mu^* F, E)
\]
for all $F\in \opmod(C_0X)$ and $E\in \opmod(C_0 Y)$; see \cite[Theorem 2.17]{CH-Kadison}.

Now suppose that we have a pair of {proper}, continuous maps $\mu_1:Y_1\to X$ and $\mu_2:Y_2\to X$. 
The fibred product $Y_1\times_X Y_2$ fits into a commuting diagram
\begin{equation*}\label{eq:pullback-diagram}
\xymatrix@R=10pt{
& Y_1\times_X Y_2 \ar[dl]_-{\pi_1} \ar[dr]^-{\pi_2} & \\
Y_1 \ar[dr]_-{\mu_1} & & Y_2 \ar[dl]^-{\mu_2}\\
&X&
}
\end{equation*}
where $\pi_i(y_1,y_2)\coloneqq y_i$, and we consider the pair of functors
\[
\xymatrix@C=60pt{
\opmod(C_0 Y_1) \ar@<.5ex>[r]^-{\pi_{2*} \pi_1^*}   \ar@<-.5ex>[r]_-{ \mu_2^* \mu_{1*}}& \opmod(C_0 Y_2).
}
\]
The adjunction between pullbacks and pushforwards yields, as observed in \cite{BenRou}, a canonical natural transformation $\mu_2^*\mu_{1*} \to \pi_{2*} \pi_1^*$ called the \emph{base-change} map associated to $\mu_1$ and $\mu_2$, and the maps $\mu_1$ and $\mu_2$ are said to satisfy the \emph{Beck-Chevalley condition} (for operator modules) if the base-change map is an isomorphism.  This base-change map is \eqref{eq:comparison} by another name: the functors $\mu_2^* \mu_{1*}$ and $\pi_{2*}\pi_1^*$ are given by Haagerup tensor product with the operator bimodules $C_0 Y_1\otimes^{\h}_{C_0 X} C_0 Y_2$ and $C_0 (Y_1\times_X Y_2)$, respectively, and the base-change map is the natural transformation of functors induced by the bimodule map \eqref{eq:comparison}. Theorem \ref{thm:main} thus implies:

\begin{corollary}\label{cor:BC}
The maps $\mu_1$ and $\mu_2$ satisfy the Beck-Chevalley condition for operator modules if and only if $\displaystyle\max_{x\in X}\min_{i=1,2}\# \mu_i^{-1}x<\infty$.\hfill\qed
\end{corollary}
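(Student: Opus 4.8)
The plan is to deduce the corollary directly from Theorem \ref{thm:main}; the only real work is to match the Beck--Chevalley condition against condition (a) of that theorem. As explained in the text, associativity of the Haagerup tensor product, together with the fact that $C_0 Y_i$ is the unit bimodule over itself, identifies the composite functors $\mu_2^*\mu_{1*}$ and $\pi_{2*}\pi_1^*$ with the functors $\opmod(C_0 Y_1)\to\opmod(C_0 Y_2)$ given by Haagerup tensoring on the right with the $C_0 Y_1$--$C_0 Y_2$ operator bimodules $M = C_0 Y_1\otimes^{\h}_{C_0 X}C_0 Y_2$ and $N = C_0(Y_1\times_X Y_2)$, respectively. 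First I would record this identification precisely, and verify that under it the base-change natural transformation of \cite{BenRou} is the transformation of functors $(-)\otimes^{\h}_{C_0 Y_1}M\Rightarrow(-)\otimes^{\h}_{C_0 Y_1}N$ induced by the bimodule map \eqref{eq:comparison}.

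Granting this, one implication is formal: if \eqref{eq:comparison} is a completely bounded isomorphism of bimodules, then, because $\otimes^{\h}$ is functorial and tensoring with a completely bounded bimodule isomorphism whose inverse is completely bounded yields, after tensoring, completely bounded module isomorphisms with completely bounded inverses, the induced transformation $(-)\otimes^{\h}_{C_0 Y_1}M\Rightarrow(-)\otimes^{\h}_{C_0 Y_1}N$ is a natural isomorphism; that is, the Beck--Chevalley condition holds.

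For the converse I would run a Yoneda-type argument: evaluate the base-change transformation at the object $C_0 Y_1\in\opmod(C_0 Y_1)$, which is a legitimate nondegenerate operator module over itself. Under the canonical isomorphisms $C_0 Y_1\otimes^{\h}_{C_0 Y_1}M\cong M$ and $C_0 Y_1\otimes^{\h}_{C_0 Y_1}N\cong N$, the component of the transformation at $C_0 Y_1$ is exactly \eqref{eq:comparison}; hence if the base-change transformation is a natural isomorphism, then \eqref{eq:comparison} is a completely bounded isomorphism. Combining the two implications with the equivalence (a)$\Leftrightarrow$(c) of Theorem \ref{thm:main} yields the corollary. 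Throughout, properness of $\mu_1$ and $\mu_2$, and hence of the base-change projections $\pi_1$ and $\pi_2$, is what guarantees that the pullback--pushforward adjunctions, and therefore the base-change map, are defined.

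The main obstacle is the bookkeeping in the first step: the base-change map is defined abstractly as a mate of the units and counits of the adjunctions $\mu_i^*\dashv\mu_{i*}$ and $\pi_i^*\dashv\pi_{i*}$, and one must chase it through these adjunctions and the associativity isomorphisms for $\otimes^{\h}$ to confirm that it coincides with the concrete map \eqref{eq:comparison} sending $b_1\otimes b_2$ to the function $(y_1,y_2)\mapsto b_1(y_1)b_2(y_2)$. Everything after that identification is formal.
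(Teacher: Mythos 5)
Your proposal is correct and follows essentially the same route as the paper, which simply observes that the base-change transformation is induced by the bimodule map \eqref{eq:comparison} and then invokes the equivalence (a)$\Leftrightarrow$(c) of Theorem \ref{thm:main}. The one point the paper leaves implicit---that a natural isomorphism of the tensor functors forces \eqref{eq:comparison} itself to be a completely bounded isomorphism---is supplied cleanly by your evaluation at the object $C_0 Y_1$ via $C_0 Y_1\otimes^{\h}_{C_0 Y_1}M\cong M$.
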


\subsection*{Descent of continuous fields of Hilbert spaces} In \cite{BenRou} B\'enabou and Roubaud showed, in a very general setting, that the Beck-Chevalley condition leads to an identification between two approaches to the \emph{descent problem}.  Let us explain what this means in the present context. 

Let $\mu:Y\to X$ be a continuous, proper, surjective map of locally compact Hausdorff spaces, and consider the problem of characterising the image of the pullback functor $\mu^*: \opmod(C_0 X)\to \opmod(C_0 Y)$. In \cite{Crisp-descent} we gave a solution to this problem: an operator module $\module{E}$ over $C_0 Y$ has the form $\mu^*\module{F}$ for some operator module $\module{F}$ over $C_0 X$ if and only if there exists a completely isometric $C_0 Y$-linear splitting $\module{E}\to \mu^* \mu_* \module{E}$ of the multiplication map $\mu^* \mu_*\module{E}\to \module{E}$, satisfying a coassociativity condition; see \cite[Theorem 4.1]{Crisp-descent}. If  $\mu$ satisfies the Beck-Chevalley condition---which, by Corollary \ref{cor:BC}, is equivalent to the condition that the fibres of $\mu$ should have uniformly bounded finite cardinality---then we have natural isomorphisms of spaces of completely bounded module maps
\begin{equation}\label{eq:BenRou}
\CB_{C_0 Y}(\module{E}, \mu^*\mu_* \module E) \xrightarrow{\cong}\CB_{C_0 Y}(\module{E}, \pi_{2*}\pi_1^*\module{E}) \xrightarrow{\cong} \CB_{C_0(Y\times_X Y)}( \pi_2^*\module{E}, \pi_1^*\module E)
\end{equation}
coming from base change and from the adjunction between pullback and pushforward. In this way the existence of a splitting $\module{E}\to \mu^*\mu_* \module{E}$ as studied in \cite{Crisp-descent} can be reformulated as the existence of a completely bounded, $C_0(Y\times_X Y)$-linear map $\pi_2^*\module{E}\to \pi_1^*\module{E}$ satisfying certain properties, stated explicitly below.

The advantage of this reformulation becomes apparent when one's primary interest is not in the category $\opmod$ of operator modules and completely bounded maps, but in the subcategory $\CMod$  of \emph{Hilbert $C^*$-modules}  and adjointable maps. Let us briefly recall that a Hilbert $C^*$-module over a $C^*$-algebra $A$ is a right $A$-module  equipped with an $A$-valued inner product satisfying  analogues of the axioms of a ($\C$-valued) Hilbert-space inner product; while an adjointable map of Hilbert $C^*$-modules $t:F\to E$ is one for which there exists a map $t^*:E\to F$ satisfying $\langle tf|e\rangle=\langle f|t^*e\rangle$. See \cite{Lance} or \cite{BLM} for details. A Hilbert $C^*$-module over a commutative $C^*$-algebra $C_0X$ is (as shown in \cite{Takahashi}) the same thing as a continuous field of Hilbert spaces over $X$. By contrast, the notion of an operator module over $C_0X$ is more general, encompassing for instance also  {measurable} fields of Hilbert spaces over $X$, and much more besides.

A theorem of Blecher \cite[Theorem 4.3]{Blecher-newapproach} implies that the pullback of a Hilbert $C^*$-module along a map of spaces (defined, as above, by a Haagerup tensor product) is again a Hilbert $C^*$-module. But the same is not generally true of pushforwards, and this means that the characterisation of the image of the pullback functor $\mu^*:\CMod(C_0 X)\to \CMod(C_0 Y)$ in terms of splittings $\module{E}\to \mu^*\mu_* \module{E}$ given in \cite[Theorem 5.6]{Crisp-descent} necessarily involves operator modules that are not Hilbert $C^*$-modules. Using \eqref{eq:BenRou} to replace splittings $\module{E}\to \mu^*\mu_* E$ by maps $\pi_2^*\module{E}\to \pi_1^*\module{E}$ allows us to reformulate this characterisation entirely within the category of Hilbert $C^*$-modules. To state the result (Corollary \ref{cor:descent}, below) we shall need some terminology, adapted from \cite{Grothendieck}.

\begin{definition}
A \emph{descent datum} (in Hilbert $C^*$-modules) for a continuous proper map $\mu:Y\to X$ is a pair $(\module{E},\phi)$ where $\module{E}$ is a Hilbert $C^*$-module over $C_0 Y$, and where $\phi: \pi_2^* \module{E} \to \pi_1^* \module{E}$ is a unitary isomorphism of Hilbert $C^*$-modules over $C_0(Y\times_X Y)$ satisfying $\pi_{12}^*(\phi)  \pi_{23}^*(\phi) = \pi_{13}^*(\phi)$. Here $\pi_{ij}$ is the map $(y_1,y_2,y_3)\mapsto (y_i,y_j)$ from $Y\times_X Y\times_X Y$ to $Y\times_X Y$, and we are tacitly invoking  isomorphisms such as $\pi_{23}^*\pi_1^*\cong (\pi_1 \pi_{23})^* = (\pi_2 \pi_{12})^* \cong \pi_{12}^*\pi_2^*$, etc.

A morphism of descent data $(\module{E}_1,\phi_1)\to (\module{E}_2,\phi_2)$ is an adjointable mapping of Hilbert $C^*$-modules $t:\module{E}_1\to \module{E}_2$ satisfying $\pi_1^*( t)  \phi_1 = \phi_2 \pi_2^* (t)$.
Equipped with these morphisms, the descent data for $\mu$ form a $C^*$-category which we denote by $\Des\mu$. (We refer to \cite{GLR} for the notion of a $C^*$-category.)

For each Hilbert $C^*$-module $\module{F}\in \CMod(C_0 X)$ we let $\phi_{\module{F}}:\pi_2^*\mu^*\module{F}\to \pi_1^*\mu^* \module{F}$ be the canonical unitary isomorphism coming from the equality $\mu\pi_2=\mu\pi_1$.  The   assignment $\module{F}\mapsto (\mu^*\module{F},\phi_{\module{F}})$ extends to a $*$-functor $\functor{D}:\CMod(C_0 X)\to \Des\mu$, defined on morphisms by $\functor{D}(t)\coloneqq \mu^* (t)$. 
\end{definition}

\begin{corollary}\label{cor:descent}
Let $\mu:Y\to X$ be a proper, surjective mapping of locally compact Hausdorff spaces satisfying $\max_{x\in X} \#\mu^{-1}x<\infty$. The functor $\functor{D}$  is a unitary equivalence of $C^*$-categories $\CMod(C_0 X) \to \Des \mu$, and consequently the image of the pullback functor $\mu^*$ is the same as the image of the obvious forgetful functor $\Des\mu\to \CMod(C_0 Y)$.
\end{corollary}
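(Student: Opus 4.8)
The plan is to derive Corollary~\ref{cor:descent} from the splitting characterisation of the essential image of $\mu^*$ in \cite[Theorem~5.6]{Crisp-descent}, using the Beck--Chevalley isomorphism of Corollary~\ref{cor:BC} to rewrite splittings as unitary cocycles; this is the Bénabou--Roubaud translation of \cite{BenRou}. First I would assemble the inputs: $\mu$ is proper by hypothesis, hence so are the base-changed projections $\pi_1,\pi_2\colon Y\times_X Y\to Y$, so \cite[Theorem~2.17]{CH-Kadison} supplies the adjunctions $\mu^*\dashv\mu_*$ and $\pi_i^*\dashv\pi_{i*}$; and $\max_x\#\mu^{-1}x<\infty$ is precisely condition~(c) of Theorem~\ref{thm:main} in the case $\mu_1=\mu_2=\mu$, so by Corollary~\ref{cor:BC} the map $\mu$ satisfies the Beck--Chevalley condition and the chain \eqref{eq:BenRou} of natural isomorphisms of Hom-spaces is available for every operator module $E$ over $C_0 Y$ --- in particular for every Hilbert $C^*$-module $E$, for which $\pi_1^*E$ and $\pi_2^*E$ are again Hilbert $C^*$-modules over $C_0(Y\times_X Y)$ by Blecher's theorem \cite[Theorem~4.3]{Blecher-newapproach}.

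The core step is to establish a ``dictionary'': under the bijection \eqref{eq:BenRou}, a completely bounded $C_0 Y$-linear map $s\colon E\to\mu^*\mu_* E$ corresponds to a completely bounded $C_0(Y\times_X Y)$-linear map $\phi_s\colon\pi_2^*E\to\pi_1^*E$ such that (i) $s$ splits the multiplication $\mu^*\mu_* E\to E$ iff $\Delta^*\phi_s=\id_E$, where $\Delta\colon Y\to Y\times_X Y$ is the diagonal; (ii) $s$ is coassociative iff $\phi_s$ satisfies $\pi_{12}^*(\phi_s)\,\pi_{23}^*(\phi_s)=\pi_{13}^*(\phi_s)$; and (iii) $s$ is completely isometric iff $\phi_s$ is unitary. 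Since restricting the cocycle identity along $(y_1,y_2)\mapsto(y_1,y_1,y_2)$ and using surjectivity of $\pi_1$ forces $\Delta^*\phi_s=\id$ once $\phi_s$ is invertible, (i)--(iii) say exactly that $(E,s)$ satisfies the hypotheses of \cite[Theorem~5.6]{Crisp-descent} iff $(E,\phi_s)$ is a descent datum in the sense of the preceding definition. I would also check naturality --- an adjointable $t\colon E_1\to E_2$ satisfies $(\mu^*\mu_* t)\,s_1=s_2\,t$ iff $\pi_1^*(t)\,\phi_{s_1}=\phi_{s_2}\,\pi_2^*(t)$ --- so that $(E,s)\mapsto(E,\phi_s)$ is an isomorphism of $C^*$-categories from the category $\functor{Split}$ of Hilbert $C^*$-modules over $C_0 Y$ equipped with a coassociative completely isometric splitting of multiplication (and splitting-preserving adjointable maps) onto $\Des\mu$. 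By construction it sends the canonical splitting of $\mu^*F$ to the canonical cocycle $\phi_F$, hence intertwines the comparison functor $F\mapsto(\mu^*F,\text{canonical splitting})$ with $\functor{D}$.

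It then remains to see that this comparison functor $\CMod(C_0 X)\to\functor{Split}$ is an equivalence of categories. Essential surjectivity is \cite[Theorem~5.6]{Crisp-descent}. Faithfulness holds because $\mu^*$ itself is faithful: if $\mu^*(\bar t)=0$ then $\overline{a(y)}\,b(y)\,\langle\bar t f\,|\,g\rangle_{C_0 X}(\mu y)=0$ for all $y$ and all $a,b\in C_0 Y$, $f,g\in F_1$, so $\langle\bar t f\,|\,g\rangle_{C_0 X}$ vanishes on $\mu(Y)=X$ and $\bar t=0$. Fullness follows from the functorial content of \cite[Theorem~5.6]{Crisp-descent} (the reconstruction of $F$ from $(E,s)$ is natural in $(E,s)$). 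Hence $\functor{D}$ is an equivalence of categories; since it is a $*$-functor between $C^*$-categories, a standard argument --- choose a quasi-inverse as a $*$-functor and replace the comparison isomorphisms by their unitary parts via polar decomposition, cf.\ \cite{GLR} --- upgrades it to a unitary equivalence, which is the first assertion. For the second, writing $U\colon\Des\mu\to\CMod(C_0 Y)$ for the forgetful functor we have $\mu^*=U\circ\functor{D}$, and essential surjectivity of $\functor{D}$ gives $\image(\mu^*)=\image(U)$.

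I expect the main obstacle to be parts~(ii) and (iii) of the dictionary: the equivalence of coassociativity of $s$ with the cocycle identity for $\phi_s$, and of complete isometry with unitarity. This is the Bénabou--Roubaud calculation and needs a careful chase through the coherence isomorphisms relating the pullbacks $\pi_{ij}^*\pi_k^*$ along the three projections $Y\times_X Y\times_X Y\to Y$, the pseudofunctoriality of pushforward, and the interaction of the Beck--Chevalley $2$-cell with the adjunction units and counits. The delicate point in (iii) is that the base-change map is a priori only completely bounded (Corollary~\ref{cor:BC}), so transporting complete isometry of $s$ to unitarity of $\phi_s$ requires checking that the relevant instance of the Beck--Chevalley isomorphism --- and the canonical maps composing to it --- are $*$-preserving on the Hilbert $C^*$-modules in play, which rests on $\pi_i^*$ preserving adjointability. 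A secondary, routine matter is whether \cite[Theorem~5.6]{Crisp-descent} already supplies the full equivalence $\CMod(C_0 X)\simeq\functor{Split}$ or only the essential-image statement, in which case the faithfulness and fullness checks above are needed.
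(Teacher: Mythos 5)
Your proposal is correct and follows essentially the same route as the paper: invoke \cite[Theorem~5.6]{Crisp-descent} for the equivalence with the category of coassociative isometric splittings, then use the B\'enabou--Roubaud translation through the isomorphisms \eqref{eq:BenRou} (available by Corollary~\ref{cor:BC}) to convert splittings into descent data, and identify the composite with $\functor{D}$. The only difference is that the cited theorem already supplies a full unitary equivalence $\CMod(C_0X)\cong\CComod(C_0Y\otimes^{\h}_{C_0X}C_0Y)$, so your separate faithfulness and fullness checks are not needed.
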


\begin{proof}
By \cite[Theorem 5.6]{Crisp-descent} there is a unitary equivalence 
\[
\functor{F}:\CMod(C_0 X) \xrightarrow{\cong} \CComod( C_0 Y\otimes^{\h}_{C_0 X} C_0 Y)
\]
where $\CComod( C_0 Y\otimes^{\h}_{C_0 X} C_0 Y)$ is a $C^*$-category whose objects are splittings $\module{E}\to \mu^*\mu_* \module{E}$ as above; we refer the reader to \cite{Crisp-descent} for the precise definition. As in \cite{BenRou} we can use the isomorphisms \eqref{eq:BenRou} to turn such splittings into descent data, and vice versa, yielding a unitary equivalence 
\[
\functor{G}:\CComod( C_0 Y\otimes^{\h}_{C_0 X} C_0 Y) \xrightarrow{\cong} \Des\mu.
\]
It is then a simple matter to verify that the functor $\functor{D}$ is isomorphic to the composite $\functor{G} \functor{F}$.
\end{proof}

\begin{remarks}
\begin{enumerate}[\rm(1)]
\item The functor $\functor{D}:\CMod(C_0 X)\to \Des\mu$ can be an equivalence without the map $\mu$ satisfying the Beck-Chevalley condition; for example, if $X$ is a point then $\functor{D}$ is always an equivalence. (We owe this observation to an anonymous referee.) It may in fact be the case that the functor $\functor{D}$ is an equivalence for every proper surjection $\mu$; proving this in the absence of the Beck-Chevalley condition is beyond the scope of this note.
\item The finiteness condition on $\mu$ in Corollary \ref{cor:descent}  is quite restrictive, but interesting examples do exist. For instance, if $G$ is a complex semisimple Lie group then the Dixmier-Douady theory \cite{DD} combined with a theorem of Wallach \cite{Wallach-cyclic}  gives a Morita equivalence $C^*_r G\sim C_0(\widehat{H}/W)$ between the reduced group $C^*$-algebra of $G$ and the commutative $C^*$-algebra of Weyl-invariant continuous functions on the Pontryagin dual of a Cartan subgroup $H$ of  $G$. This is explained in \cite{Penington-Plymen}. Using this Morita equivalence, Corollary \ref{cor:descent} says that the functor of {tempered parabolic restriction} from $G$ to $H$ introduced in \cite{CCH-Compositio}  gives an equivalence between $\CMod(C^*_r G)$ and the category of descent data for the quotient mapping $\widehat{H}\to \widehat{H}/W$. 
\end{enumerate}
\end{remarks}

\subsection*{The Haagerup tensor product}

Turning now to the proof of Theorem \ref{thm:main}, we first establish some notation and recall some basic facts about the Haagerup tensor product, referring to \cite{BLM} for details. Let $B_1$ and $B_2$ be $C^*$-algebras, and let $A$ be a  $C^*$-algebra equipped with nondegenerate $*$-homomorphisms $A\to M(B_1)$ and $A\to M(B_2)$ into the multiplier algebras of $B_1$ and $B_2$. We use these homomorphisms to regard $B_1$ and $B_2$ as $A$-bimodules. For all positive integers $n$ and $m$ we consider the `external' matrix product 
\[
\begin{gathered}
\left(M_{n,m}\otimes B_1\right) \otimes \left( M_{m,n}\otimes B_2\right) \xrightarrow{D\otimes E\mapsto D\odot E} M_n\otimes (B_1 \otimes_{A} B_2) \\
(d\otimes b_1)\odot (e\otimes b_2) \coloneqq de\otimes (b_1\otimes b_2).
\end{gathered}
\]
Here $M_{n,m}$ means $n\times m$ complex matrices, $M_n$ means $M_{n,n}$, and an undecorated $\otimes$ means the algebraic tensor product over $\C$.

We equip each $M_n\otimes B$ with its $C^*$-algebra norm, and we equip $M_{n,m}\otimes B$ with the norm that it inherits as a subspace of $M_{\max(n,m)}\otimes B$. The {Haagerup seminorm} on $M_n\otimes (B_1 \otimes_{A} B_2)$ is defined by 
\[
\|F\|_{\h} \coloneqq \inf \left\{ \left. \|D\| \|E\| \ \right|\  m\in\N,\ D\in M_{n,m}\otimes B_1,\ E \in M_{m,n}\otimes B_2,\ F=D\odot E \right\}.
\]
The Haagerup tensor product $B_1\otimes^{\h}_A B_2$ is the separated completion of $B_1\otimes_A B_2$ in the $n=1$ Haagerup seminorm. The Haagerup norms extend to the matrix spaces $M_n\otimes(B_1\otimes_A^{\h} B_2)$, giving $B_1\otimes_A^{\h} B_2$ the structure of an operator space. 

The same operator-space structure can be defined by embedding $B_1\otimes_A B_2$ into the amalgamated free product $C^*$-algebra $B_1\ast_A B_2$ (cf. \cite[Theorem 3.1]{CES}, \cite[Lemma 1.14]{Pisier-Kirchberg},  \cite[p.515]{Ozawa}). The universal property of the free product then ensures that whenever $\rho_1:B_1\to C$ and $\rho_2:B_2\to C$ are $*$-homomorphisms satisfying $\rho_1(b_1 a)\rho_2(b_2) = \rho_1(b_1) \rho_2(ab_2)$ for all $b_1\in B_1$, $b_2\in B$, and $a\in A$, we obtain a completely contractive map
\begin{equation}\label{eq:rho1-rho2-def}
[\rho_1\rho_2] : B_1\otimes^{\h}_A B_2 \to C,\qquad b_1\otimes b_2 \mapsto \rho_1(b_1)\rho_2(b_2).
\end{equation} 

Returning to the setting of Theorem \ref{thm:main}, we let $\mu_1:Y_1\to X$ and $\mu_2:Y_2\to X$ be  continuous maps of locally compact Hausdorff spaces. Pullback of functions along $\mu_1$ and $\mu_2$ gives nondegenerate $*$-homomorphisms  $\mu_i^*:C_0 X\to M(C_0Y_i)$ which we use to form the Haagerup tensor product $C_0 Y_1 \otimes_{C_0 X}^{\h} C_0 Y_2$. Pullback along each coordinate projection $\pi_i:Y_1\times_X Y_2\to Y_i$ gives a nondegenerate $*$-homomorphism $\pi_i^*:C_0 Y_i \to M(C_0(Y_1\times_X Y_2))$, and since $\mu_1\pi_1=\mu_2\pi_2$ these homomorphisms induce, as in \eqref{eq:rho1-rho2-def}, a completely contractive map
\begin{equation*}\label{eq:pi1pi2}
\begin{aligned}
&[\pi_1^*\pi_2^*]: C_0 Y_1 \otimes_{C_0 X}^{\h} C_0 Y_2 \to M(C_0(Y_1\times_X Y_2)).
\end{aligned}
\end{equation*}
This  map $[\pi_1^*\pi_2^*]$ actually has image contained in $C_0(Y_1\times_X Y_2)$, and is precisely the map \eqref{eq:comparison} appearing in Theorem \ref{thm:main}.

We let $\flip$ denote the flip map $\tau: b_1\otimes b_2\mapsto b_2\otimes b_1$ from $C_0 Y_1\otimes_{C_0 X} C_0 Y_2$ to  $C_0 Y_2\otimes_{C_0 X} C_0 Y_1$.

\subsection*{Proof that (a) implies (b) in Theorem \ref{thm:main}}

If $[\pi_1^*\pi_2^*]$ is a completely bounded isomorphism, then taking operator-space adjoints (cf.~\cite[1.2.25]{BLM}) shows that the map $[\pi_2^*\pi_1^*]:C_0 Y_2\otimes^{\h}_{C_0 X} C_0 Y_1 \to C_0 (Y_2\times_X Y_1)$ is also a completely bounded isomorphism. We can factor the flip map $\tau$ as 
\[
C_0 Y_1 \otimes_{C_0 X}^{\h} C_0 Y_2 \xrightarrow{[\pi_1^*\pi_2^*]} C_0 (Y_1\times_X Y_2) \xrightarrow{\sigma^*} C_0 (Y_2\times_X Y_1) \xrightarrow{ [\pi_2^*\pi_1^*]^{-1} } C_0 Y_2\otimes_{C_0 X}^{\h} C_0 Y_1
\]
where $\sigma^*$ is the $*$-isomorphism induced by the flip $\sigma:Y_1\times_X Y_2\to Y_2\times_X Y_1$.  Isomorphisms of $C^*$-algebras are complete isometries, and $[\pi_1^*\pi_2^*]$ is a complete contraction, so we have
\begin{equation}\label{eq:tau-bound}
\pushQED{\qed}
\| \tau\|_{\cb} \leq \| [\pi_2^*\pi_1^*]^{-1}\|_{\cb}.\qedhere
\popQED
\end{equation}

\subsection*{Proof that (b) implies (c)  in Theorem \ref{thm:main}} 

The following computation is inspired by, but distinct from, Tomiyama's computation of the completely bounded norm of the transpose map \cite{Tomiyama}; cf.~Remark \ref{rem:transpose}. Recall that the completely bounded norm $\|\flip\|_{\cb}$ is the supremum over $n$ of the operator norms of the maps
\[
\id_{M_n}\otimes \flip : M_n \otimes (C_0 Y_1\otimes_{C_0 X}^{\h} C_0 Y_2) \to M_n \otimes (C_0 Y_2\otimes_{C_0 X}^{\h} C_0 Y_1).
\]

\begin{lemma}\label{lem:flip-lower-bound}
$\displaystyle\|\flip\|_{\cb} \geq \sqrt{\max_x \min_i \#\mu_i^{-1} x}$. In particular, if the right-hand side is infinite then so is $\|\flip\|_{\cb}$, and so (b) implies (c) in Theorem \ref{thm:main}.
\end{lemma}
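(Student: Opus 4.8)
The plan is to produce, for every integer $k\le N:=\max_x\min_i\#\mu_i^{-1}x$, a matrix amplification of the flip with norm at least $\sqrt k$; letting $k\to N$ (or $k\to\infty$ when $N=\infty$) then gives the inequality, and with it the ``in particular'' clause and the implication (b)$\Rightarrow$(c), since (c) asserts exactly that the right-hand side is finite. First I fix $x_0\in X$ with $\#\mu_1^{-1}x_0\ge k$ and $\#\mu_2^{-1}x_0\ge k$, choose distinct points $y_1^{(1)},\dots,y_1^{(k)}\in\mu_1^{-1}x_0$ and $y_2^{(1)},\dots,y_2^{(k)}\in\mu_2^{-1}x_0$, and, using local compactness and Urysohn's lemma, choose $f_1,\dots,f_k\in C_0 Y_1$ and $g_1,\dots,g_k\in C_0 Y_2$ with $0\le f_i,g_j\le 1$, pairwise disjoint supports within each $Y_i$, and $f_i\equiv 1$ near $y_1^{(i)}$, $g_j\equiv 1$ near $y_2^{(j)}$. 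Disjointness gives $f_if_{i'}=g_jg_{j'}=0$ for distinct indices, which is what keeps the relevant matrix norms controlled.

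I then write down a test element $F\in M_n\otimes(C_0 Y_1\otimes^{\h}_{C_0 X}C_0 Y_2)$ built from the $f_i$, $g_j$ and a suitably chosen matrix of scalars, arranged so that: (i) $F$ admits a factorization $F=D\odot E$ with $\|D\|=\|E\|=1$ --- an estimate that, thanks to the disjoint supports, reduces to a purely matricial identity, giving $\|F\|_{\h}\le 1$; while (ii) the flipped element $(\id_{M_n}\otimes\flip)F\in M_n\otimes(C_0 Y_2\otimes^{\h}_{C_0 X}C_0 Y_1)$ is expensive to factor. For (ii) I use the completely contractive maps of the form \eqref{eq:rho1-rho2-def}: I take a finite-dimensional Hilbert space $H$ and $*$-homomorphisms $\rho_1:C_0 Y_1\to B(H)$, $\rho_2:C_0 Y_2\to B(H)$ supported on the fibres over $x_0$, so that both send $\mu_i^*\phi$ to $\phi(x_0)\,\id_H$ and hence automatically satisfy the compatibility hypothesis of \eqref{eq:rho1-rho2-def}, but with $\rho_1(f_i)$ and $\rho_2(g_j)$ chosen to be explicit operators that do \emph{not} commute; I estimate $\big\|\sum_{i,j}(\text{scalar})\,e_{ij}\otimes\rho_2(g_j)\rho_1(f_i)\big\|$ from below by evaluating it on a well-chosen unit vector of $H^{\oplus n}$, obtaining $\sqrt k$. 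Since the induced map $[\rho_2\rho_1]$ is completely contractive, this forces $\|(\id_{M_n}\otimes\flip)F\|_{\h}\ge\sqrt k$, and combining with (i) gives $\|\flip\|_{\cb}\ge\sqrt k$.

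The heart of the argument, and the step I expect to be the main obstacle, is the simultaneous design of the scalar matrix and of the non-commuting pair $(\rho_1,\rho_2)$: one needs a configuration in which the flip genuinely interchanges the two legs of the tensor product --- turning a cheap column-times-row factorization in one order into an expensive one in the other --- while everything stays anchored to the single fibre over $x_0$. Commutativity of $C_0 Y_1$ and $C_0 Y_2$ is what forces this interchange to cost only $\sqrt k$, rather than the factor $k$ that appears in Tomiyama's computation of the completely bounded norm of the transpose on $M_k$; this is the precise sense in which the present computation, though inspired by his, is distinct from it.
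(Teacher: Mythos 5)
Your outline follows exactly the same strategy as the paper's proof: anchor everything to a single fibre over a point $x_0$ where both preimages have at least $k$ elements, build a test element from orthogonal bump functions, get a cheap factorization $D\odot E$ in one order, and detect the cost of the flip by pushing the flipped element through a completely contractive map $[\rho_2\rho_1]$ built from a non-commuting pair of finite-dimensional representations supported on that fibre. The logic connecting this to the statement (let $k\to N$, and note that (c) is exactly finiteness of the right-hand side) is also correct. But as written this is a plan rather than a proof: the one step you yourself flag as ``the main obstacle'' --- the simultaneous choice of the scalar matrix and of the pair $(\rho_1,\rho_2)$ --- is precisely the mathematical content of the lemma, and you have not supplied it. Without an explicit $F$ one cannot verify either of your claims (i) and (ii), and it is not obvious a priori that a configuration achieving the ratio $\sqrt k$ exists; indeed the whole point of the lemma is that it does.

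For the record, the paper's resolution of this design problem is Fourier-analytic. With $b_j(y_i)=\delta_{ij}$ and $c_k(z_j)=\delta_{kj}$ as in your setup, and $\omega$ a primitive $m$th root of unity, one takes
\[
F=\sum_{i,j,k=1}^m \omega^{k(j-i)}\,e_{i,j}\otimes b_j\otimes c_k ,
\]
which factors as a row of $b$'s times a column of $c$'s with $\|D\|\,\|E\|\le\sqrt m$ (the phases make $DD^*$ a multiple of the identity, which is where orthogonality of the $b_j$ and of the $c_k$ enters). For the lower bound one takes $\rho_1(b)=\sum_i b(y_i)e_{i,i}$ diagonal in the standard basis and $\rho_2(c)=U\bigl(\sum_i c(z_i)e_{i,i}\bigr)U^*$ diagonal in the Fourier basis, where $U=m^{-1/2}\sum_{i,j}\omega^{ij}e_{i,j}$; then $(\id_{M_m}\otimes[\rho_2\rho_1]\circ\flip)(F)=\sum_{i,j}e_{i,j}\otimes e_{i,j}$, which has norm $m$, giving $\|\id_{M_m}\otimes\flip\|\ge m/\sqrt m=\sqrt m$. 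So the ``well-chosen unit vector'' you allude to is really the interplay of two mutually unbiased bases, and the phases $\omega^{k(j-i)}$ in $F$ are tuned to this choice. If you carry out this construction (or an equivalent one), your argument becomes the paper's proof; until then the central estimate is asserted rather than established.
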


\begin{proof}
Fix a point $x\in X$, and suppose that $\#\mu_i^{-1} x \geq m$ for $i=1,2$. We will prove that $\| \id_{M_m}\otimes \flip\|\geq \sqrt{m}$.

Let $y_1,\ldots, y_m \in Y_1$ and $z_1,\ldots, z_m\in Y_2$ be distinct points with $\mu_1(y_i)=\mu_2(z_i)=x$ for all $i$. Let $b_1,\ldots, b_m\in C_0 Y_1$  and $c_1,\ldots, c_m\in C_0 Y_2$ be a collection of functions with the following properties:
\[
\|b_i\|=\|c_i\|=b_i(y_i)=c_i(z_i)=1\text{ for all $i$; and }  b_ib_j=c_i c_j=0 \text{ when }i\neq j.
\]
Let $\omega\in \C$ be a primitive $m$th root of $1$, and consider the following element of $M_m\otimes (C_0 Y_1\otimes_{C_0 X}C_0 Y_2)$:
\[
F\coloneqq \sum_{i,j,k=1}^m \omega^{k(j-i)} e_{i,j}\otimes b_j\otimes c_k.
\]
Here $e_{i,j}$ denotes the matrix with $1$ in the $i,j$ position and zeros everywhere else.

We first claim that $\|F\|_{\h}\leq \sqrt{m}$. To show this we write $F=D\odot E$ where $D\in M_{1,m}\otimes M_{m,m} \otimes C_0 Y_1$ and $E\in M_{m,1}\otimes M_{m,m} \otimes C_0 Y_2$ are defined by 
\[
D \coloneqq \sum_{i,j,k=1}^m \omega^{-ki} e_{1,j}\otimes e_{i,k} \otimes b_j,\quad 
E \coloneqq \sum_{j,k=1}^m \omega^{kj} e_{j,1} \otimes e_{k,j} \otimes c_k.
\]

We have
\[
\| D\|^2 = \| DD^*\|   = \left\|  m \sum_{i,j=1}^m   e_{i,i} \otimes |b_j|^2 \right\| = m \left\| \sum_{j=1}^m |b_j|^2\right\| =m,
\]
while
\[
\|E\|^2 = \| E^* E\| = \left\| \sum_{j,k=1}^m e_{j,j}\otimes |c_k|^2 \right\| = \left\| \sum_{k=1}^m |c_k|^2\right\| =1,
\]
giving $ \|F\|_{\h} \leq \|D\| \| E\| \leq \sqrt{m}$.

Now let $\rho_1:C_0 Y_1 \to M_m$ and $\rho_2:C_0 Y_2\to M_m$ be the $*$-homomorphisms 
\[
\rho_1(b)\coloneqq \sum_{i=1}^m b(y_i) e_{i,i}\qquad \text{and}\qquad  \rho_2(c)\coloneqq U\left( \sum_{i=1}^m c(z_i)e_{i,i}\right)U^*,
\]
where $U \coloneqq m^{-1/2}\sum_{i,j=1}^m \omega^{ij}e_{i,j}$ (the unitary Fourier transform in $M_m$).
A straightforward computation shows that $(\id_{M_m}\otimes [\rho_2\rho_1] \flip)( F)  = \sum_{i,j} e_{i,j}\otimes e_{i,j}$, which is $m$ times an orthogonal projection. Since $[\rho_2\rho_1]$ is a complete contraction we find that
\[
\| (\id_{M_m}\otimes \flip) F\|_{\h} \geq \|(\id_{M_m}\otimes [\rho_2\rho_1] \flip)(F)\|_{M_m\otimes M_m} =m\geq \sqrt{m}\| F\|_{\h}.\qedhere
\]
\end{proof}

\subsection*{Proof that (c) implies (a) in Theorem \ref{thm:main}}

\begin{lemma}\label{lem:pi1pi2}
If the function $\displaystyle x\mapsto  \min_{i=1,2} \# \mu_i^{-1} x$ is uniformly bounded on $X$ then $[\pi_1^*\pi_2^*]$ is a completely bounded isomorphism. Explicitly, 
\[
\|[\pi_1^*\pi_2^*]\|_{\cb}\leq 1\quad \text{ and }\quad \displaystyle\|[\pi_1^*\pi_2^*]^{-1}\|_{\cb} \leq \sqrt{\max_{x\in X}\min_{i=1,2} \#\mu_i^{-1} x}.
\]
\end{lemma}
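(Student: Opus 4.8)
\subsection*{Proposal for the proof of Lemma \ref{lem:pi1pi2}}

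The bound $\|[\pi_1^*\pi_2^*]\|_{\cb}\le 1$ is already in hand: $[\pi_1^*\pi_2^*]$ is of the form $[\rho_1\rho_2]$ as in \eqref{eq:rho1-rho2-def}, hence completely contractive. So the whole content is the bound on the inverse. Write $N:=\max_{x\in X}\min_{i}\#\mu_i^{-1}x<\infty$. The image of $[\pi_1^*\pi_2^*]$ contains every function $(y_1,y_2)\mapsto b_1(y_1)b_2(y_2)$; the $*$-algebra these generate separates points of $Y_1\times_X Y_2$ and vanishes nowhere, so by Stone--Weierstrass the image is dense. Hence it suffices to prove the \emph{lower estimate}
\[
\|F\|_{\h}\ \le\ \sqrt{N}\,\bigl\|(\id_{M_n}\otimes[\pi_1^*\pi_2^*])(F)\bigr\| \qquad\text{for all }n\in\N\ \text{and }F\in M_n\otimes(C_0Y_1\otimes_{C_0X}C_0Y_2),
\]
since this forces $[\pi_1^*\pi_2^*]$ to be injective with closed and therefore (by density) full image, and yields $\|[\pi_1^*\pi_2^*]^{-1}\|_{\cb}\le\sqrt N$. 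I would prove the estimate by producing, for $g:=(\id_{M_n}\otimes[\pi_1^*\pi_2^*])(F)$, a factorisation $F=D\odot E$ with $\|D\|\,\|E\|\le\sqrt N\,\|g\|$, allowing the middle index of $D\odot E$ to be countably infinite (i.e.\ working in the completed Haagerup tensor product; the partition of unity below may have to be infinite).

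The model is a single fibre. Fix $x$ with, say, $\#\mu_1^{-1}x=k\le N$, enumerate $\mu_1^{-1}x=\{u_1,\dots,u_k\}$, and choose norm-one $b_1,\dots,b_k\in C_0Y_1$ with $b_i(u_j)=\delta_{ij}$ and pairwise disjoint supports (possible since $Y_1$ is Hausdorff). Let $D_x$ be the block row with $i$-th block $b_iI_n$ and $E_x$ the block column with $i$-th block $\gamma_i$, where $\gamma_i\in M_n\otimes C_0Y_2$ is a norm-controlled (Tietze) extension of $z\mapsto g(u_i,z)$ from the closed set $\mu_2^{-1}x$. Then $\|D_x\|=1$, $\|E_x\|\le\sqrt k\,\|g\|\le\sqrt N\,\|g\|$, and $D_x\odot E_x$ has the same image as $F$ over the fibre $\mu_1^{-1}x\times\mu_2^{-1}x$. (This is the continuous-field form of the elementary bound $\|\cdot\|_{\h}\le\sqrt{\min(k,\ell)}\,\|\cdot\|_{\min}$ on $\ell^\infty_k\otimes^{\h}\ell^\infty_\ell$, which is what makes Blecher's theorem quantitative in the finite-dimensional case.) To globalise one would take a partition of unity $\sum_j\psi_j^2=1$ on $X$ subordinate to open sets $U_j$ over each of which one of the two maps is $\le N$-to-one \emph{with continuously separable sheets}, perform the construction above sheet-by-sheet over $U_j$ to get $D_j^0,E_j^0$ with $\|D_j^0\|\le1$, $\|E_j^0\|\le\sqrt N\|g\|$ representing $F$ over $U_j$, then (using centrality of the $C_0X$-action) absorb $\psi_j\circ\mu_1$ into $D_j^0$ and $\psi_j\circ\mu_2$ into $E_j^0$ and assemble the pieces into a block row $D$ and block column $E$. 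Since $\sum_j(\psi_j\circ\mu_i)^2=1$, one gets $\|D(y_1)\|^2=\|\sum_jD_j(y_1)D_j(y_1)^*\|\le\sup_j\|D_j^0(y_1)\|^2\le1$ and likewise $\|E\|\le\sqrt N\|g\|$, while $D\odot E=F$.

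The obstacle — and the real work — is that a cover $\{U_j\}$ need not exist in the strong sense just used. The set $\{x:\#\mu_1^{-1}x\le N\}$ is in general neither open nor closed, and even over an open set on which $\mu_1$ is everywhere $\le N$-to-one the sheets may admit no continuous separation: for $z\mapsto z^2$ there is no continuous unit-vector-valued function on the source sending the two preimages of each point to orthogonal vectors (let $z\to0$). So a partition of unity that merely switches between a ``$\mu_1$-construction'' and a ``$\mu_2$-construction'' breaks down exactly where the closures of $\{\#\mu_1^{-1}x>N\}$ and $\{\#\mu_2^{-1}x>N\}$ meet. I expect the fix to be a more careful, $g$-dependent factorisation near such points, exploiting that near a point of the smaller fibre at which extra sheets accumulate either (i) those sheets coalesce by branching, in which case continuity of $g$ across the branch locus makes its ``antisymmetric'' components vanish there and the patched $D,E$ stay continuous — the even/odd decomposition handling $z\mapsto z^2$ is the prototype — or (ii) they escape to infinity (the non-proper case), in which case the corresponding Haagerup-tensor summands have disjoint supports and are zero. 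Carrying this out while keeping the per-piece constant equal to $\sqrt N$ is the main technical hurdle; I would organise it by induction on $N$, peeling off one sheet of whichever of $\mu_1,\mu_2$ is $\le N$-to-one at the point in question.
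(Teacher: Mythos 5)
Your reduction of the problem to the lower estimate $\|F\|_{\h}\le\sqrt N\,\|(\id_{M_n}\otimes[\pi_1^*\pi_2^*])(F)\|$, together with the contraction bound coming from \eqref{eq:rho1-rho2-def} and the Stone--Weierstrass argument for density of the image, is exactly right, and your single-fibre computation is in substance the paper's Lemma \ref{lem:h-min}. But the globalisation step is a genuine gap, and you have diagnosed the failure of your own strategy correctly: there is in general no open cover of $X$ over which the sheets of $\mu_1$ or $\mu_2$ admit a continuous separation, so the partition-of-unity assembly of the fibrewise factorisations $D_x\odot E_x$ into one global factorisation $F=D\odot E$ is not available. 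The proposed repair (a $g$-dependent factorisation near branch loci, induction on $N$, peeling off sheets) is not carried out, and it is not clear it can be made to work while keeping the constant equal to $\sqrt N$; as written the argument is incomplete at precisely its central step.

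The missing idea is that no global factorisation needs to be constructed at all. The paper first proves (Lemma \ref{lem:h-norm-sup-over-x}) that the relative Haagerup norm is computed fibrewise:
\[
\|F\|_{\h}=\sup_{x\in X}\big\|(\id_{M_n}\otimes\iota_{1,x}^*\otimes\iota_{2,x}^*)F\big\|_{\h},
\]
where $\iota_{i,x}:\mu_i^{-1}x\to Y_i$ are the inclusions. This follows from the completely isometric embedding of $C_0Y_1\otimes^{\h}_{C_0X}C_0Y_2$ into the amalgamated free product $C_0Y_1\ast_{C_0X}C_0Y_2$: in every irreducible representation of the free product the algebra $C_0X$ acts centrally, hence by a character, so every irreducible representation factors through a single fibre, and taking the supremum over irreducible representations gives the formula. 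Granting this, one places $[\pi_1^*\pi_2^*]$ in a commuting square whose vertical arrows (restriction to the $\ell^\infty$-product over all fibres) are complete isometries and whose bottom arrow is the product over $x$ of the maps $C(\mu_1^{-1}x)\otimes^{\h}C(\mu_2^{-1}x)\to C_0(\mu_1^{-1}x\times\mu_2^{-1}x)$; your single-fibre estimate (Lemma \ref{lem:h-min}) bounds the inverse of that bottom arrow by $\sqrt N$ uniformly in $x$, and the bound transfers to $[\pi_1^*\pi_2^*]$ with no patching whatsoever. Replacing your globalisation attempt by a proof of the sup-over-fibres formula is what closes the gap.
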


The proof relies on a couple of preliminary observations about the Haagerup norm. 

\begin{lemma}\label{lem:h-norm-sup-over-x}
For each $x\in X$ and $i=1,2$ let $\iota_{i,x} : \mu_i^{-1}x\to Y_i$ be the inclusion map, and consider the linear map
\[
\iota_{1,x}^*\otimes \iota_{2,x}^* : C_0 Y_1\otimes_{C_0 X} C_0 Y_2 \to C_0(\mu_1^{-1}x)\otimes C_0(\mu_2^{-1}x).
\]
For each  $n\geq 1$ and each $F\in M_n\otimes (C_0 Y_1\otimes_{C_0 X} C_0 Y_2)$ we have
\[
\|F\|_{\h} = \sup_{x\in X} \big\| (\id_{M_n}\otimes \iota_{1,x}^*\otimes \iota_{2,x}^*)F \big\|_{\h}.
\]
\end{lemma}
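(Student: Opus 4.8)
The plan is to prove the identity $\|F\|_{\h} = \sup_{x} \|(\id\otimes\iota_{1,x}^*\otimes\iota_{2,x}^*)F\|_{\h}$ by establishing the two inequalities separately. The inequality $\geq$ is the easy direction: each map $\iota_{i,x}^*\colon C_0 Y_i \to C_0(\mu_i^{-1}x)$ is a surjective $*$-homomorphism, hence completely contractive, and the pair $(\iota_{1,x}^*, \iota_{2,x}^*)$ is compatible over the common quotient $C_0 X \to C_0(\{x\})=\C$ in the sense required by \eqref{eq:rho1-rho2-def} (the $A$-bimodule relation $\iota_{1,x}^*(b_1 a)\otimes \iota_{2,x}^*(b_2) = \iota_{1,x}^*(b_1)\otimes\iota_{2,x}^*(ab_2)$ holds because both sides only see the value $a(x)$). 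Therefore $\iota_{1,x}^*\otimes\iota_{2,x}^*$ extends to a complete contraction $C_0 Y_1\otimes^{\h}_{C_0 X} C_0 Y_2 \to C_0(\mu_1^{-1}x)\otimes^{\h}_{\C} C_0(\mu_2^{-1}x)$, and applying $\id_{M_n}\otimes(-)$ preserves this, giving $\|(\id\otimes\iota_{1,x}^*\otimes\iota_{2,x}^*)F\|_{\h}\leq \|F\|_{\h}$ for every $x$, hence the sup is bounded by $\|F\|_{\h}$.

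For the reverse inequality $\leq$, let $r\coloneqq \sup_x \|(\id\otimes\iota_{1,x}^*\otimes\iota_{2,x}^*)F\|_{\h}$; I want to produce, for any $\varepsilon>0$, a factorisation $F = D\odot E$ with $\|D\|\,\|E\| \leq r+\varepsilon$. The strategy is a partition-of-unity / local-to-global argument. Fix $x\in X$. By definition of the Haagerup norm on the fibrewise tensor product $C_0(\mu_1^{-1}x)\otimes^{\h} C_0(\mu_2^{-1}x)$ there is a factorisation $(\id\otimes\iota_{1,x}^*\otimes\iota_{2,x}^*)F = \bar D_x\odot\bar E_x$ with $\|\bar D_x\|\,\|\bar E_x\| < r+\varepsilon$, and (rescaling) we may assume $\|\bar D_x\|=\|\bar E_x\|<\sqrt{r+\varepsilon}$. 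Now lift: since $\iota_{i,x}^*$ is a surjective $*$-homomorphism of $C^*$-algebras, by Tietze-type extension / the lifting property for matrix algebras over $C_0 Y_i$ one can choose $D_x\in M_{n,m}\otimes C_0 Y_1$ and $E_x\in M_{m,n}\otimes C_0 Y_2$ restricting to $\bar D_x,\bar E_x$ with the same (or arbitrarily close) norms. The element $F - D_x\odot E_x$ then lies in the kernel of $\id\otimes\iota_{1,x}^*\otimes\iota_{2,x}^*$; one checks this kernel is generated (as a submodule, after completion) by elements $b_1\otimes b_2$ with $b_1$ or $b_2$ vanishing on $\mu_i^{-1}x$, and using continuity this forces $\|(\id\otimes\iota_{1,x'}^*\otimes\iota_{2,x'}^*)(F - D_x\odot E_x)\|_{\h}$ to be small for $x'$ in a neighbourhood $U_x$ of $x$.

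Having done this for each $x$, cover $X$ (or the relevant compact part) by finitely many such $U_{x_1},\dots,U_{x_N}$ — here properness and the vanishing-at-infinity of $F$ let us reduce to a compact set — and take a partition of unity $\{h_j\}$ subordinate to the cover with $\sum h_j^2 = 1$. Then patch: set $D \coloneqq \bigoplus_j (h_j\circ\mu_1)^{1/?}D_{x_j}$ and $E\coloneqq \bigoplus_j (\cdots) E_{x_j}$ arranged as a block row and block column respectively, so that $D\odot E = \sum_j (h_j\circ\mu_1 \cdot h_j\circ\mu_2)(\text{stuff})$ telescopes to $F$ up to an error controlled by $\sum_j \|(\id\otimes\iota_{1,x_j}^*\otimes\iota_{2,x_j}^*\text{-small})\|$; iterating the patching argument (or passing to a limit) drives the error to zero while keeping $\|D\|\,\|E\|\leq r+\varepsilon$. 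The block-diagonal/row-column structure ensures $\|D\|^2 = \|\sum_j (h_j\circ\mu_1) D_{x_j}D_{x_j}^*\|\leq \max_j\|D_{x_j}\|^2 < r+\varepsilon$ and similarly for $E$, using $\sum h_j^2=1$ and the fact that the $h_j\circ\mu_i$ commute with everything in sight.

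The main obstacle is making the local-to-global patching genuinely produce an exact factorisation of $F$ rather than merely an approximate one: a single partition-of-unity patch only matches $F$ on the nose where a single $h_j$ equals $1$, leaving a cross-term error on the overlaps. The clean way around this is probably not to patch $F$ directly but to first use Lemma-style continuity to note that $\|\cdot\|_{\h}$ is lower semicontinuous in $x$ and that the local factorisations can be chosen to agree on overlaps up to arbitrarily small norm, then invoke completeness of the Haagerup tensor product to sum a rapidly convergent series of successive corrections $F = \sum_k G_k$ with $\sum_k \|G_k\|_{\h}$ only slightly larger than $r$; since the Haagerup norm is subadditive on such sums and each $G_k$ is handled by a single chart, this yields $\|F\|_{\h}\leq r+\varepsilon$. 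Care is needed that the kernel of $\id\otimes\iota_{1,x}^*\otimes\iota_{2,x}^*$ is correctly identified with $C_0(Y_1\setminus\mu_1^{-1}x)\otimes_{C_0 X}C_0 Y_2 + C_0 Y_1\otimes_{C_0 X}C_0(Y_2\setminus\mu_2^{-1}x)$ so that the "small on a neighbourhood" claim is legitimate; this is where the $C_0 X$-balancing is essential.
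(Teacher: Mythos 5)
The easy inequality ($\geq$) in your proposal is correct and standard. The hard direction, however, contains a genuine gap, and it is precisely the one you flag yourself as ``the main obstacle''. After lifting the fibrewise factorisations and patching, you arrive at $D\odot E=\sum_j (h_j\circ\mu_1)\,(D_{x_j}\odot E_{x_j}) = F+\sum_j (h_j\circ\mu_1)G_j$ with $G_j\coloneqq D_{x_j}\odot E_{x_j}-F$. The only control you have on $G_j$ is that its images under $\id_{M_n}\otimes\iota_{1,x'}^*\otimes\iota_{2,x'}^*$ are small for $x'$ in the neighbourhood $U_{x_j}$ carrying $h_j$. But to conclude that $\|(h_j\circ\mu_1)G_j\|_{\h}$ is small you would need to bound a \emph{global} Haagerup norm by the supremum of \emph{fibrewise} Haagerup norms over $\operatorname{supp}h_j$ --- and that is an instance of the very lemma being proved (applied over the closed subset $\operatorname{supp}h_j\subseteq X$). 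The obvious alternative bound $\|(h_j\circ\mu_1)G_j\|_{\h}\leq\|h_j\|_\infty\|G_j\|_{\h}$ is useless, since $\|G_j\|_{\h}$ is of order $\|F\|_{\h}$, not of order $\varepsilon$. So the proposed ``rapidly convergent series of corrections'' never gets off the ground: each correction is controlled only fibrewise, and converting fibrewise control into global control is the entire content of the statement. Two further problems: you invoke properness to reduce to a compact part of $X$, but the lemma assumes nothing about the $\mu_i$ beyond continuity; and the identification of $\ker(\iota_{1,x}^*\otimes\iota_{2,x}^*)$ with the sum of the two ``vanishing on the fibre'' submodules is asserted rather than proved, which matters because kernels of maps induced on balanced, completed tensor products are delicate.

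The paper avoids all of this with a short representation-theoretic argument that you may want to adopt: $C_0Y_1\otimes^{\h}_{C_0X}C_0Y_2$ embeds completely isometrically into the amalgamated free product $C_0Y_1\ast_{C_0X}C_0Y_2$ via $b_1\otimes b_2\mapsto b_1\ast b_2$; since $C_0X$ is central in this free product, in every irreducible representation it acts by a character $a\mapsto a(x)$, so every irreducible representation factors through $\iota_{1,x}^*\ast\iota_{2,x}^*$ for some $x\in X$; and the norm of an element of a $C^*$-algebra (or of a matrix algebra over it) is the supremum of its norms over irreducible representations. This yields both inequalities simultaneously, with no lifting, covering, or partition-of-unity argument at all.
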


\begin{proof}
The Haagerup tensor product  $C_0Y_1\otimes_{C_0 X}^{\h} C_0 Y_2$ embeds completely isometrically in the amalgamated free product $C_0Y_1 \ast_{C_0 X} C_0 Y_2$, via the map $b_1\otimes b_2\mapsto b_1\ast b_2$. In every irreducible representation of this free product the algebra $C_0 X$ acts centrally, and thus by a character $a\mapsto a(x)$. So every irreducible representation of $C_0Y_1\ast_{C_0 X}C_0Y_2$ factors through some $\iota_{1,x}^*\ast \iota_{2,x}^*$. Taking the supremum over all of the irreducible representations thus gives the desired formula for $\|F\|_{\h}$. 
\end{proof}

The other ingredient in our proof of Lemma \ref{lem:pi1pi2} is the  following well-known fact:

\begin{lemma}\label{lem:h-min}
For all $n,m\in \N$, all $C^*$-algebras $A$, and all $F\in M_n\otimes \C^m\otimes A$, we have
\[
\|F\|_{\min} \leq \|F\|_{\h} \leq \sqrt{m} \|F\|_{\min}
\]
where $\h$ indicates the Haagerup norm on $M_n\otimes (\C^m\otimes A)$, and $\min$ indicates the $C^*$-algebra norm. The same bounds hold for $F\in M_n\otimes A\otimes \C^m$.
\end{lemma}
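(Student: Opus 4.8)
The plan is to treat the two inequalities separately: the lower bound will come from the universal property \eqref{eq:rho1-rho2-def}, and the upper bound from an explicit factorisation $F=D\odot E$ of the kind used to define the Haagerup seminorm.

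For $\|F\|_{\min}\leq\|F\|_{\h}$: regard $\C^m$ and $A$ as $C^*$-algebras amalgamated over $\C$, set $C\coloneqq\C^m\otimes^{\min}A$, and let $\rho_1:\C^m\to M(C)$ and $\rho_2:A\to M(C)$ be the two canonical $*$-homomorphisms. Their images commute, and the compatibility hypothesis of \eqref{eq:rho1-rho2-def} is vacuous (the amalgamating algebra there is $\C$), so we obtain a completely contractive map $[\rho_1\rho_2]:\C^m\otimes^{\h}A\to\C^m\otimes^{\min}A$. On elementary tensors this is the identity $e_k\otimes a\mapsto e_k\otimes a$, so it carries $F$ to the same element of $M_n\otimes\C^m\otimes A$ now measured in the minimal $C^*$-norm; complete contractivity then gives $\|F\|_{\min}\leq\|F\|_{\h}$.

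For $\|F\|_{\h}\leq\sqrt{m}\,\|F\|_{\min}$: using $\C^m\otimes A\cong A^{\oplus m}$ write $F=\sum_{k=1}^m e_k\otimes F_k$ with $F_k\in M_n\otimes A$ and $\max_k\|F_k\|=\|F\|_{\min}$. Take the middle dimension to be $mn$, and identify $M_{n,mn}$ with the space of $1\times m$ block rows of $n\times n$ matrices and $M_{mn,n}$ with the $m\times 1$ block columns. Let $D\in M_{n,mn}\otimes\C^m$ be $\sum_{k=1}^m R_k\otimes e_k$, where $R_k$ is the block row with $I_n$ in the $k$-th slot, and let $E\in M_{mn,n}\otimes A$ be the block column whose $k$-th block is $F_k$. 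Expanding the definition of $\odot$ and using $R_k\iota_l=\delta_{kl}I_n$ for the block-column inclusions $\iota_l$, one checks $D\odot E=F$. Under the identification $M_{n,mn}\otimes\C^m=\bigoplus_{k=1}^m M_{n,mn}$ the element $D$ is the tuple $(R_1,\dots,R_m)$, so $\|D\|=\max_k\|R_k\|=1$; and $\|E\|^2=\|E^*E\|=\bigl\|\sum_{k=1}^m F_k^*F_k\bigr\|\leq\sum_{k=1}^m\|F_k\|^2\leq m\|F\|_{\min}^2$. Hence $\|F\|_{\h}\leq\|D\|\,\|E\|\leq\sqrt{m}\,\|F\|_{\min}$. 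For $F\in M_n\otimes A\otimes\C^m$ the argument is symmetric: put the $\C^m$-part on the right, taking $E$ to be the block column with $I_n\otimes e_k$ in the $k$-th slot (so $\|E\|=1$) and $D$ the block row with $F_k$ in the $k$-th slot (so $\|D\|^2=\bigl\|\sum_k F_kF_k^*\bigr\|\leq m\|F\|_{\min}^2$), again with $D\odot E=F$.

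The argument is almost entirely bookkeeping; the one point that needs a little care is the identification of the amplified tensor space $M_{n,mn}\otimes\C^m$ with $\bigoplus_{k=1}^m M_{n,mn}$ together with the verification that the block matrices $D$ and $E$ satisfy $D\odot E=F$, both of which are routine expansions of the definitions.
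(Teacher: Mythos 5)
Your proof is correct and takes essentially the same route as the paper: the key upper bound comes from the same kind of explicit factorisation $F=D\odot E$ with an isometric block row carrying the $\C^m$ part and a block column stacking the $F_k$, giving $\|E\|^2=\|\sum_k F_k^*F_k\|\le m\|F\|_{\min}^2$. The only cosmetic differences are that you derive the inequality $\|F\|_{\min}\le\|F\|_{\h}$ from the universal property \eqref{eq:rho1-rho2-def} where the paper simply cites the general operator-space fact \cite[1.5.13]{BLM}, and that you treat $M_n\otimes A\otimes\C^m$ by a symmetric factorisation where the paper takes adjoints.
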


The bound $\|F\|_{\min}\leq \|F\|_{\h}$ holds for tensor products of arbitrary operator spaces  \cite[1.5.13]{BLM}.  Since we were not immediately able to find a reference for the other inequality, let us give a proof.

\begin{proof}
Take an arbitrary element
\[
F =  \sum_{i,j=1}^n \sum_{k=1}^m e_{i,j}\otimes \epsilon_k \otimes a_{i,j,k} \in M_n\otimes \C^m\otimes A
\]
where as before $e_{i,j}$ denotes the matrix with $1$ in the $i,j$ position and zeros everywhere else,  and  $\epsilon_1,\ldots,\epsilon_m\in \C^m$ are the standard basis elements. Factor $F$ as $D\odot E$, where 
\[
\begin{aligned}
D &\coloneqq \sum_{i=1}^n \sum_{k=1}^m e_{1,i}\otimes e_{i,k}\otimes \epsilon_k \in M_{1,n}\otimes M_{n,m}\otimes \C^m\qquad \text{and} \\
E &\coloneqq \sum_{i,j=1}^n \sum_{k=1}^m e_{i,1} \otimes e_{k,j} \otimes a_{i,j,k} \in M_{n,1}\otimes M_{m,n}\otimes A.  
\end{aligned}
\]
(Here $D\odot E$ is defined by using the Kronecker product to identify $M_{1,m}\otimes M_{m,m}\cong M_{m,m^2}$ and $M_{m,1}\otimes M_{m,m}\cong M_{m^2, m}$.)

We have $E^*E  = \sum_{k=1}^m F_k^* F_k$, where $F_k = \sum_{i,j=1}^n e_{i,j}\otimes a_{i,j,k}\in M_n\otimes A$. 
Since $DD^*$ is the identity in $M_n\otimes \C^m$, this computation gives the bound
\[
\|F\|^2_{\h} \leq  \|E\|^2 \leq m\sup_k\|F_k\|_{M_n\otimes A}^2 = m \|F\|_{\min}.
\]
This proves the result for $\C^m\otimes A$, and taking adjoints   gives the result for $A\otimes \C^m$.
\end{proof}

\begin{proof}[Proof of Lemma \ref{lem:pi1pi2}]
For each $x\in X$ and $i=1,2$ let $\pi_{i,x} : \mu_1^{-1}x\times \mu_2^{-1}x \to \mu_i^{-1}x$ be the projection onto the $i$th coordinate. Consider the commuting diagram
\begin{equation}\label{eq:pi1pi2-proof-diag}
\xymatrix@C=80pt@R=40pt{
C_0 Y_1 \otimes^{\h}_{C_0 X} C_0 Y_2 \ar[r]^-{[\pi_1^*\pi_2^*]} \ar[d]_-{\prod_{x} \iota_{1,x}^*\otimes\iota_{2,x}^*} & C_0(Y_1\times_X Y_2) \ar[d]^-{\prod_x (\iota_{1,x}\times \iota_{2,x})^*} \\
 \prod_{x\in X} C(\mu_1^{-1}x)\otimes^{\h}  C(\mu_2^{-1} x) \ar[r]^{\prod_x [\pi_{1,x}^* \pi_{2,x}^*]} & \prod_{x\in X} C_0(\mu_1^{-1}x \times \mu_2^{-1}x)
 }
 \end{equation}
 where $\prod$ denotes the $\ell^\infty$ product of operator spaces. Lemma \ref{lem:h-norm-sup-over-x} implies that the left-hand vertical arrow in \eqref{eq:pi1pi2-proof-diag} is a complete isometry. The right-hand vertical arrow is an injective $*$-homomorphism, hence it too is a complete isometry. If the quantity $\min_{i}\#\mu_i^{-1} x$ is uniformly bounded  then Lemma \ref{lem:h-min} implies that the bottom horizontal arrow is a completely bounded isomorphism of operator spaces: indeed, this map is a complete contraction, and its inverse has cb norm bounded by $\sqrt{\max_x \min_i \#\mu_i^{-1} x}$. Hence the commutativity of the diagram ensures that $[\pi_1^*\pi_2^*]$ is a completely bounded isomorphism onto a closed subspace of $C_0(Y_1\times_X Y_2)$, and an application of the Stone-Weierstrass theorem shows that this subspace is all of $C_0(Y_1\times_X Y_2)$. Since the vertical arrows in \eqref{eq:pi1pi2-proof-diag} are complete isometries, the previously noted bounds on the cb norms of the bottom horizontal arrow and its inverse imply the same bounds for $[\pi_1^*\pi_2^*]$ and its inverse. 
 \end{proof}

This completes the proof of Theorem \ref{thm:main}. The following formula for the cb norm of the flip $\flip$ falls out of the proof:

\begin{proposition}\label{prop:flip}
The flip map $\flip:C_0 Y_1\otimes^{\h}_{C_0 X}C_0 Y_2\to C_0 Y_2\otimes^{\h}_{C_0 X} C_0 Y_1$ associated to a pair of continuous maps $\mu_1:Y_1\to X$ and $\mu_2:Y_2\to X$ has
\[
\|\flip\|_{\cb} = \sqrt{\max_{x\in X} \min_{i=1,2} \# \mu_i^{-1} x}.
\]
(If one side of the equation is infinite, then so is the other.)
\end{proposition}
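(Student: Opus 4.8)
The plan is to read the identity off from results already in hand. Write $N\coloneqq\max_{x\in X}\min_{i=1,2}\#\mu_i^{-1}x$. Lemma \ref{lem:flip-lower-bound} gives $\|\flip\|_{\cb}\geq\sqrt{N}$ unconditionally, and in particular settles the degenerate case $N=\infty$, in which both sides of the asserted equality are then infinite. So it remains to prove $\|\flip\|_{\cb}\leq\sqrt{N}$ under the assumption $N<\infty$.

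For this I would invoke Lemma \ref{lem:pi1pi2}, whose hypothesis is exactly $N<\infty$. Since $N$ is symmetric in the two maps, Lemma \ref{lem:pi1pi2} applies verbatim after interchanging $(Y_1,\mu_1)$ and $(Y_2,\mu_2)$, giving that $[\pi_2^*\pi_1^*]:C_0 Y_2\otimes^{\h}_{C_0 X}C_0 Y_1\to C_0(Y_2\times_X Y_1)$ is a completely bounded isomorphism with $\|[\pi_2^*\pi_1^*]^{-1}\|_{\cb}\leq\sqrt{N}$; applied directly it also gives that $[\pi_1^*\pi_2^*]$ is a completely bounded isomorphism. The latter is precisely the hypothesis under which the proof that (a) implies (b) produces the factorisation of $\flip=\tau$ and hence the bound \eqref{eq:tau-bound}, namely $\|\flip\|_{\cb}\leq\|[\pi_2^*\pi_1^*]^{-1}\|_{\cb}$. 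Combining these, $\|\flip\|_{\cb}\leq\sqrt{N}$, and with the lower bound of Lemma \ref{lem:flip-lower-bound} this yields the claimed equality.

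I do not anticipate any genuine difficulty: the proposition is an assembly of Lemma \ref{lem:flip-lower-bound}, the estimate \eqref{eq:tau-bound}, and Lemma \ref{lem:pi1pi2}. The only point requiring a moment's care is the bookkeeping of the two maps—checking that it is $[\pi_2^*\pi_1^*]$, not $[\pi_1^*\pi_2^*]$, whose inverse controls $\|\flip\|_{\cb}$ in \eqref{eq:tau-bound}, and that the symmetry of $N$ legitimately lets Lemma \ref{lem:pi1pi2} bound $\|[\pi_2^*\pi_1^*]^{-1}\|_{\cb}$ by the same quantity $\sqrt{N}$.
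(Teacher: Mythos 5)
Your proposal is correct and is essentially the paper's own proof: the lower bound from Lemma \ref{lem:flip-lower-bound}, and the upper bound from Lemma \ref{lem:pi1pi2} combined with \eqref{eq:tau-bound}. Your explicit check that the symmetry of $N$ lets Lemma \ref{lem:pi1pi2} control $\|[\pi_2^*\pi_1^*]^{-1}\|_{\cb}$ is exactly the bookkeeping the paper leaves implicit.
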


\begin{proof}
Theorem \ref{thm:main} shows that if one side is infinite then so is the other. When both sides are finite  Lemma \ref{lem:flip-lower-bound} gives one of the required inequalities, while combining Lemma \ref{lem:pi1pi2} and the bound \eqref{eq:tau-bound} gives the other.
\end{proof}

\begin{remark}  \label{rem:transpose}
Let $B$ be a $C^*$-algebra and consider the flip $\flip$ on $B\otimes^{\h} B$. Simple algebraic manipulations show that 
\[
\|\id_{M_n}\otimes \flip\| = \| T\otimes\id_{B\otimes^{\h}B}\|
\]
where $T$ is transposition on $M_n$. 
We have an isometric embedding $M_n\otimes(B\otimes^{\h}B)\into M_n\otimes (B\ast B)$, and it is interesting to compare the transpose maps on these two matrix spaces. Take $B=\C^m$. On the one hand, Proposition \ref{prop:flip} shows that the transpose on $M_n\otimes(B\otimes^{\h}B)$ has norm bounded by $\sqrt{m}$ (and equal to $\sqrt m$ when $n\geq m$). On the other hand, Tomiyama's results \cite{Tomiyama} show that if $m\geq 3$ then the transpose map on $M_n\otimes(B\ast B)$ has norm $n$ (because  $B\ast B$ surjects on to the $C^*$-algebra of $\operatorname{PSL}(2,\Z)$, and hence has irreducible representations of arbitrarily large dimension).
\end{remark}

\bibliographystyle{alpha}
\bibliography{descent}

\end{document}